
\documentclass[
a4paper, 
12pt, 
BCOR=5mm,
fleqn, 
twoside, 
headsepline, 
parskip=half-,
]{scrartcl}

\usepackage[english]{babel}
\usepackage[utf8]{inputenc}
\usepackage[T1]{fontenc}
\usepackage[english,cleanlook]{isodate}
\usepackage{lmodern}
\usepackage{amsmath}
\usepackage{mathtools}
\usepackage{amssymb}
\usepackage{amsthm}
\usepackage{scrlayer-scrpage} 
\usepackage{enumitem}
\usepackage{tikz} 

\ohead{\footnotesize Mario B. Schulz}
\ihead{\footnotesize Yamabe flow on non-compact manifolds with unbounded initial curvature}
\pagestyle{scrheadings}
\setkomafont{pageheadfoot}{}

\providecommand{\Hyp}{\mathbb{H}}
\providecommand{\Sp}{\mathbb{S}}
\providecommand{\dd}{d}
\providecommand{\Rsc}{\mathrm{R}}
\providecommand{\ubelow}{c_1}
\providecommand{\uabove}{c_2}
\providecommand{\Rbelow}{\kappa_1}
\providecommand{\Rabove}{\kappa_2}

\DeclarePairedDelimiter\abs{\lvert}{\rvert}
\DeclarePairedDelimiter\nm{\lVert}{\rVert}
\DeclarePairedDelimiter\sk{\langle}{\rangle}
\DeclarePairedDelimiter\interval{]}{[}
\DeclarePairedDelimiter\Interval{[}{[}


\DeclareMathOperator{\Yamabe}{Y}

\theoremstyle{plain}
\newtheorem{thm}{Theorem} 
\newtheorem{lem}{Lemma}[section]

\theoremstyle{definition}
\newtheorem*{defn*}{Definition}
 
\theoremstyle{remark}
\newtheorem*{rem*}{Remark}
 
\bibliographystyle{plain}

\usepackage[
pdftitle={Yamabe flow on non-compact manifolds with unbounded initial curvature},
pdfauthor={Mario B. Schulz}, 
pdfborder={0 0 0},
]{hyperref}


\title{Yamabe flow on non-compact manifolds with unbounded \\ initial curvature}
\author{Mario B. Schulz\thanks{This research was supported by the Swiss National Science Foundation under grant 200020\_159925.}
\\\footnotesize\itshape
ETH Z\"urich, R\"amistrasse 101, 8092 Z\"urich, Switzerland
}
\date{\today}
 
\begin{document}

\maketitle

\begin{abstract}
We prove global existence of Yamabe flows on non-compact manifolds $M$ of dimension $m\geq3$ under the assumption that the initial metric $g_0=u_0g_M$ is conformally equivalent to a complete background metric $g_M$ of bounded, non-positive scalar curvature and positive Yamabe invariant with conformal factor $u_0$ bounded from above and below.  
We do not require initial curvature bounds. 
In particular, the scalar curvature of $(M,g_0)$ can be unbounded from above and below without growth condition. 
\end{abstract}

Richard Hamilton's \cite{Hamilton1989} Yamabe flow describes a family of Riemannian metrics $g(t)$ subject to the evolution equation $\frac{\partial}{\partial t}g=-\Rsc_g\,g$, where $\Rsc_g$ denotes the scalar curvature corresponding to the metric $g$. 
This equation tends to conformally deform a given initial metric towards a metric of vanishing scalar curvature.  
Hamilton proved existence of Yamabe flows on compact manifolds without boundary. 
Their asymptotic behaviour was subsequently analysed by Chow \cite{Chow1992}, Ye \cite{Ye1994}, Schwetlick and Struwe \cite{Schwetlick2003} and Brendle \cite{Brendle2005, Brendle2007}. 
The theory of Yamabe flows on non-compact manifolds is not as developed as in the compact case. 
Daskalopoulos and Sesum \cite{Daskalopoulos2013} analysed the profiles of self-similar solutions (Yamabe solitons).  
The question of existence in general was addressed by 
Ma and An who obtained the following results on complete, non-compact Riemannian manifolds $(M,g_0)$ satisfying certain curvature assumptions: 
\begin{itemize}
\item If $(M,g_0)$ has Ricci curvature bounded from below and uniformly bounded, non-positive scalar curvature, then there exists a global Yamabe flow on $M$ with $g_0$ as initial metric \cite{Ma1999}. 
\pagebreak[1]
\item If $(M,g_0)$ is locally conformally flat with Ricci curvature bounded from below and uniformly bounded scalar curvature, then there exists a short-time solution to the Yamabe flow on $M$ with $g_0$ as initial metric \cite{Ma1999}. 
\item If $(M,g_0)$ has non-negative scalar curvature $\Rsc_{g_0}$ (possibly unbounded from above) and if the equation $-\Delta_{g_0} w=\Rsc_{g_0}$ has a non-negative solution $w$ in $M$, then there exists a global Yamabe flow on $M$ with $g_0$ as initial metric \cite{Ma2016}. 
\end{itemize}
More recently, Bahuaud and Vertman \cite{Bahuaud2014,Bahuaud2016} constructed Yamabe flows starting from spaces with incomplete edge singularities such that the singular structure is preserved along the flow. 
{Choi}, {Daskalopoulos}, and {King} \cite{Choi2018} were able to find solutions to the Yamabe flow on $\mathbb{R}^m$ which develop a type II singularity in finite time.

In dimension $m=2$, where the Yamabe flow coincides with the Ricci flow, Giesen and Topping \cite{Topping2010,Giesen2011,Topping2015} introduced the notion of instantaneous completeness and obtained existence and uniqueness of instantaneously complete Ricci flows on arbitrary surfaces. 
In particular, they do not require any assumptions on the curvature of the initial surface in order to prove existence of solutions. 
It is natural to ask whether Giesen and Topping's results generalise to non-compact manifolds of higher dimension. 

In \cite{Schulz2016}, the author obtained existence of instantaneously complete Yamabe flows on hyperbolic space of arbitrary dimension $m\geq3$ provided the initial metric is conformally hyperbolic with conformal factor and scalar curvature bounded from above.  
The goal of this paper is to construct complete Yamabe flows on non-compact manifolds $M$ of dimension $m\geq3$ starting from some complete initial metric $g_0$ but \emph{without} any curvature assumption on $(M,g_0)$. 
In particular, the initial scalar curvature $\Rsc_{g_0}\colon M\to\mathbb{R}$ is allowed to be unbounded from above and below.   
Instead we assume $g_0$ to be conformally equivalent to some ``well-behaved'' background metric $g_{M}$ on $M$ and only require pointwise bounds on the conformal factor $u_0$ characterising $g_0=u_0 g_M$. 
More precisely, we prove the following statement. 

\begin{thm}\label{thm:existence}
Let $(M,g_M)$ be a complete, non-compact Riemannian manifold of dimension $m\geq3$ with positive Yamabe invariant and non-positive, bounded scalar curvature $-\Rabove\leq\Rsc_{g_{M}}\leq-\Rbelow\leq0$. 
Let $g_0=u_0g_{M}$ be any conformal metric on $M$ with conformal factor $u_0\in C^{2,\alpha}(M)$ for some $0<\alpha<1$ satisfying 
\[0<\ubelow\leq u_0\leq \uabove<\infty.\]
Then, there exists a global Yamabe flow $(g(t))_{t\in\Interval{0,\infty}}$ on $M$ satisfying 
\begin{enumerate}[label={\normalfont(\arabic*)}]
\item $g(0)=g_0$ in the sense that $g(t)$ converges locally smoothly to $g_0$ as $t\searrow 0$,
\item $(\Rbelow t+\ubelow)\,g_{M}\leq g(t)\leq (\Rabove t+\uabove)\,g_{M}$ for every $t>0$,
\item $\Rsc_{g(t)}\geq-\frac{1}{t}$ for every $t>0$. 
\end{enumerate}
\end{thm}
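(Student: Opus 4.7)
\emph{Reduction.} Since the Yamabe flow preserves the conformal class, I would write $g(t) = u(t)\,g_M$ for a positive conformal factor $u$, so that $\tfrac{\partial}{\partial t}g = -\Rsc_g\,g$ reduces to the scalar PDE $\partial_t u = -\Rsc_g\,u$. Via the conformal transformation law for scalar curvature this becomes a quasilinear equation whose principal part reads $(m-1)u^{-1}\Delta_{g_M}u$, with lower order terms involving $\abs{\nabla u}^2_{g_M}$ and $\Rsc_{g_M}$. Since the initial bounds $\ubelow \leq u_0 \leq \uabove$ keep $u$ away from $0$ and $\infty$, the PDE is uniformly parabolic as long as such bounds are preserved; the plan is therefore to construct $u$ by a compact exhaustion argument and to pass to the limit via locally uniform estimates.

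\emph{Compact approximation and a priori estimates.} Fix an exhaustion $\Omega_1 \subset \Omega_2 \subset \ldots$ of $M$ by relatively compact open sets with smooth boundaries, and solve on each $\Omega_i$ the Dirichlet initial-boundary value problem with initial data $u_0|_{\Omega_i}$ and boundary data matching the spatially constant barriers constructed below. Standard quasilinear parabolic theory provides a unique smooth solution $u_i$ on a maximal time interval. The critical step is to derive estimates uniform in $i$:
\begin{enumerate}[label={(\alph*)}]
\item The spatially constant functions $\underline{u}(t) := \Rbelow t + \ubelow$ and $\overline{u}(t) := \Rabove t + \uabove$ are a sub- and supersolution respectively, because the spatial derivative terms vanish and the hypothesis $-\Rabove \leq \Rsc_{g_M} \leq -\Rbelow$ gives $\dot{\underline{u}} = \Rbelow \leq -\Rsc_{g_M} \leq \Rabove = \dot{\overline{u}}$; the parabolic comparison principle then yields bound~(2).
\item Under Yamabe flow the scalar curvature evolves according to
\[
\partial_t \Rsc_g = (m-1)\Delta_g \Rsc_g + \Rsc_g^2,
\]
so comparison with the blow-up ODE $\dot{f} = f^2$ (whose solution $-1/t$ is $-\infty$ at $t=0$) delivers Hamilton's bound~(3). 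This step is insensitive to the complete absence of an initial lower bound on $\Rsc_{g_0}$, which is the decisive feature.
\item Together, (a) and (b) make the PDE uniformly parabolic with controlled coefficients on any relatively compact subset of $M \times (0,\infty)$, so Krylov--Safonov and Schauder estimates yield locally uniform $C^{k,\alpha}$ bounds on $u_i$ and extend the solutions to all $t > 0$.
\end{enumerate}

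\emph{Limit and initial condition.} Since every compact $K \subset M$ lies in $\Omega_i$ for all large $i$, Arzel\`a--Ascoli extracts a subsequence $u_i$ converging locally smoothly on $M \times (0,\infty)$ to a limit $u$ solving the reduced PDE. Hence $g(t) = u(t)\,g_M$ is a global Yamabe flow on $M$ and the bounds (2)--(3) pass to the limit. For the initial condition (1), the $C^{2,\alpha}$-regularity of $u_0$ together with parabolic regularity up to the initial slice gives local smooth convergence $g(t) \to g_0$ as $t \searrow 0$.

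\emph{Principal obstacle.} The main technical difficulty is propagating the estimates in (a)--(c) uniformly up to the artificial boundaries $\partial \Omega_i$: the barriers $\underline{u}, \overline{u}$ must match the Dirichlet data so that the comparison principle yields bound~(2) globally on $\Omega_i$, and Hamilton's argument for (3) must survive the presence of $\partial \Omega_i$. Handling this coupling, while retaining estimates that are genuinely independent of $i$ and that do not use any curvature assumption on $g_0$, is where I expect the bulk of the effort to lie.
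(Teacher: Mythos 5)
Your outline captures the right skeleton — reduction to a scalar PDE, exhaustion by bounded domains, barriers for the pointwise bounds, Hamilton's $\dot f = f^2$ comparison for $\Rsc_{g(t)}\geq -1/t$, and a diagonal-subsequence limit — and steps (a) and (b) are essentially the paper's Lemmas on pointwise bounds and scalar curvature. But the crucial gap is in step (c), and it is precisely the step the paper spends most of its effort on.

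The claim that (a) and (b) together make the equation ``uniformly parabolic with controlled coefficients'' so that Krylov--Safonov applies is not correct. In the form you chose, the PDE for $u$ carries a quadratic gradient nonlinearity $\tfrac{m-6}{4}\,u^{-2}\abs{\nabla u}^2_{g_M}$ which Krylov--Safonov does not handle; and if you write it instead as $\partial_t u = -\Rsc_g\,u$, the coefficient $\Rsc_g$ is only bounded \emph{below} by (b) — you have no upper bound, uniform in $i$, on the positive part of the scalar curvature. Without one, the higher-order a priori estimates do not close, and the subsequence you extract need not converge. Note also that your proposal never invokes the positive Yamabe invariant hypothesis, which is a red flag: that assumption is not decorative.

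What the paper actually does is to substitute $U = u^{(m-2)/4}$, which eliminates the gradient nonlinearity and puts the equation in divergence form, and then — this is the key lemma you are missing — uses the positive Yamabe invariant to derive a Sobolev-type inequality
\begin{align*}
\int_{B}\abs{\nabla^g f}_g^2\,\dd\mu_g \;\geq\; \Bigl(\tfrac{\inf_B u}{\sup_B u}\Bigr)^{\frac{m-2}{2}}\Yamabe(M,g_M)\,\Bigl(\int_{B}\abs{f}^{\frac{2m}{m-2}}\,\dd\mu_{g}\Bigr)^{\frac{m-2}{m}},
\end{align*}
with constant uniform over all balls and all indices $k$. Testing the evolution equation for $\Rsc_+$ with cut-offs and applying this Sobolev inequality together with H\"older and Young in several bootstrap steps yields locally uniform $L^p(B_{r_0})$ bounds on $\Rsc_+$ for every $p > m/2$, independent of $k$ and valid down to $t = 0$. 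Only then can one feed $\Rsc_{u_k g_M}$ into the elliptic Calder\'on--Zygmund inequality for $\Delta_{g_M} U_k$ to get uniform $W^{2,p}$ time-slice bounds, whence $C^{1,\alpha}$ via Sobolev embedding, and the Arzel\`a--Ascoli diagonal argument goes through including the initial slice. Without this scalar curvature estimate you do not have the compactness you are relying on, so your ``principal obstacle'' is in fact not the boundary-matching issue (which the paper handles by truncating $u_0$ to the constant $\ubelow$ near $\partial M_k$ and choosing affine boundary data $\ubelow - t\Rsc_{g_M}$), but the step you treated as routine.
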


\begin{rem*}
Typical examples of background manifolds $(M,g_M)$ satisfying the hypothesis of Theorem~\ref{thm:existence} are Euclidean space $(\mathbb{R}^m,g_{\mathbb{R}^m})$ with $\Rsc_{g_{\mathbb{R}^m}}\equiv0$ and hyperbolic space $(\Hyp,g_{\Hyp})$ with $\Rsc_{g_{\Hyp}}\equiv-m(m-1)$. 
We verify positivity of their Yamabe invariant in Lemma \ref{lem:Yamabeinvariant}. 
\end{rem*}

\section{Local existence}\label{sec:1}

Let $g_0=u_0g_M$ be any conformal metric on $(M,g_M)$. 
Since the Yamabe flow preserves the conformal class of the initial metric, any Yamabe flow $(g(t))_{t\in[0,T]}$ on $M$ with $g(0)=g_0$ is given by $g(t)=u(\cdot,t)\,g_M$ with some function $u\colon M\times[0,T]\to\interval{0,\infty}$. 
Let 
\begin{align*}
\eta\vcentcolon=\frac{m-2}{4}
\end{align*}
where $m=\dim M\geq3$ and $U=u^{\eta}$. 
Then, the metric $g=u g_M$ has scalar curvature 
\begin{align*}
\Rsc_g=U^{-\frac{m+2}{m-2}}\bigl(\Rsc_{g_{M}}U-4\tfrac{m-1}{m-2}\Delta_{g_{M}}U\bigr). 
\end{align*}
Hence, the conformal factor $u$ is subject to the evolution equation  
\begin{align*}
\frac{\partial u}{\partial t}
=-u\Rsc_g
&=-u^{1-\frac{m+2}{4}}\bigl(\Rsc_{g_{M}}u^{\eta}-4\tfrac{m-1}{m-2}\Delta_{g_{M}}u^{\eta}\bigr)
\\\notag
&= -\Rsc_{g_M}+ \tfrac{m-1}{\eta}u^{-\eta}\Delta_{g_{M}}u^{\eta} 
\\\notag
&=-\Rsc_{g_M}+(m-1)\bigl(u^{-1}\Delta_{g_M} u+(\eta-1)u^{-2}\abs{\nabla u}_{g_M}^2\bigr)
\end{align*}
which can also be expressed in the form 
\begin{align}\label{eqn:180609}
\frac{1}{\eta+1}\frac{\partial U^{1+\frac{1}{\eta}}}{\partial t}
&=-\Rsc_{g_M}U+\frac{m-1}{\eta}\Delta_{g_{M}}U.
\end{align}
Let $B_r\subset M$ denote the open metric ball of radius $r$ around some origin $p_0\in M$ and let $M_1\subset M_2\subset\ldots\subset M$ be an exhaustion of $M$ with smooth, open, connected, bounded domains such that $B_k\subset M_k\subset B_{k+1}$ for every $k\in\mathbb{N}$.
Fix any radius $4<k\in\mathbb{N}$. 
Let $\varphi\colon M\to[0,1]$ be smooth with compact support in $B_k$ such that $\varphi|_{B_{k-1}}\equiv1$. 
Under the assumption $u_0\in C^{2,\alpha}(\overline{M_k})$ for some $0<\alpha<1$ and $\ubelow\vcentcolon=\inf_{M}u_0>0$ we consider 
\begin{align}\label{eqn:initialdata}
\mathring{u}_0\vcentcolon=(1-\varphi)\ubelow+\varphi\, u_0 
\end{align}
as initial data for the Yamabe flow equation on $M_k$. 
\begin{figure}[htb]
\centering
\begin{tikzpicture}[line cap=round,line join=round,yscale=0.5,scale=1.25,
declare function={phi(\x)=exp(-1/(1-\x))/(exp(-1/(1-\x))+exp(-1/(\x)));},
declare function={fct(\x)=3.8+1.2*cos(deg(\x))-1.5*sin(deg(1.6*\x));},
]
\pgfmathsetmacro{\r}{5}
\pgfmathsetmacro{\c}{1.1}
\pgfmathsetmacro{\samp}{100}
\draw[->](-\r-0.5,0)--(\r+0.5,0)node[right]{$d$};
\draw[->](0,0)--(0,6.5);
\draw(-\r,0)node{$\scriptstyle+$};
\draw(0,0)node{$\scriptstyle+$};
\draw(1-\r,0)node{$\scriptstyle+$};
\draw(0,\c)node{$\scriptstyle+$}node[above right]{$\ubelow$};
\draw(\r,0)node{$\scriptstyle+$}node[below]{$r\vphantom{(-1)}$};
\draw(\r-1,0)node{$\scriptstyle+$}node[below]{$(r-1)$};
\draw[dotted](-\r-0.5,\c)--(\r+0.5,\c);
\draw[dotted](\r-1,0)--++(0,{fct(\r-1)});
\draw[dotted](1-\r,0)--++(0,{fct(1-\r)});
\draw[semithick,domain=-\r+1:\r-1, samples={\samp},smooth, variable=\x]plot ({\x},{fct(\x)});
\draw[semithick,dashed,domain=0.001:1,samples={11*\samp/100},smooth, variable=\x]plot (\x-\r,{fct(\x-\r)})plot (\x+\r-1,{fct(\x+\r-1)});
\draw[semithick,domain=0.0001:1,samples={\samp/2},smooth, variable=\x]plot (\x-\r,{(1-phi(1-\x))*\c+phi(1-\x)*fct(\x-\r)});
\draw[semithick,domain=0.0001:1,samples={\samp/2},smooth, variable=\x]plot (\x+\r-1,{(1-phi(\x))*\c+phi(\x)*fct(\x+\r-1)});
\draw(-\r,{fct(-\r)})node[above]{$u_0$};
\draw(-\r,\c)node[above]{$\mathring{u}_0$};
\end{tikzpicture}
\label{fig:initialdata}
\caption{Initial data $\mathring{u}_0$ for problem \eqref{eqn:pde}.}
\end{figure}
In particular, $0<\mathring{u}_0\in C^{2,\alpha}(M_k)$ coincides with $u_0$ in $B_{k-1}$ and takes the constant value $\ubelow$ in some neighbourhood of $\partial M_k$ as illustrated in Figure \ref{fig:initialdata}. 
As parabolic boundary data, we choose 
\begin{align}\label{eqn:boundarydata}
\phi(x,t)\vcentcolon=\ubelow-t\,\Rsc_{g_M}(x). 
\end{align}
Then, since $\mathring{u}_0$ and $\phi$ satisfy the first-order compatibility
conditions, there exists some $T>0$ (which a priori depends on $k$) and a solution $0<u\in C^{2,\alpha;1,\frac{\alpha}{2}}(\overline{M_k}\times[0,T])$ of 
\begin{align}\label{eqn:pde}
\left\{
\begin{aligned}
\frac{1}{m-1}\frac{\partial u}{\partial t}&=\frac{-\Rsc_{g_M}}{m-1}+\frac{\Delta_{g_M} u}{u}+\frac{(m-6)}{4}\frac{\abs{\nabla u}_{g_M}^2}{u^2} 
&&\text{ in }M_k\times[0,T],
\\
u&=\phi 
&&\text{ on }\partial M_k\times[0,T],
\\[1ex]
u&=\mathring{u}_0
&&\text{ on } M_k\times\{0\}.
\end{aligned}\right.
\end{align}
In \cite[Lemma 1.1]{Schulz2016}, the author gives a detailed proof of this short-time existence result for bounded domains in hyperbolic space using the inverse function theorem on Banach spaces. This approach generalises to smooth, bounded domains $M_k$ in any Riemannian manifold. 

\begin{lem}[Upper and lower bound]\label{lem:pointwise}
Let $0<u\in C^{2;1}(M_k\times[0,T])$ be a solution to problem \eqref{eqn:pde} with boundary data \eqref{eqn:boundarydata} and initial data \eqref{eqn:initialdata}. 
If the background scalar curvature satisfies $0\leq\Rbelow\leq-\Rsc_{g_M}\leq\Rabove$ in $M_k$, then
\begin{align*}
\ubelow+\Rbelow t\leq u(\cdot,t)\leq \uabove+\Rabove t
\end{align*}
for every $0\leq t\leq T$, where we recall $\ubelow=\inf u_0$ and $\uabove=\sup u_0$. 
\end{lem}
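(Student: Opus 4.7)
My strategy is the parabolic comparison principle with the \emph{spatially constant} barriers $\bar u(x,t) := \uabove + \Rabove t$ and $\underline u(x,t) := \ubelow + \Rbelow t$. The point that makes these natural candidates is that, since they depend only on $t$, the quasi-linear terms $\Delta_{g_M}u/u$ and $\abs{\nabla u}_{g_M}^2/u^2$ in \eqref{eqn:pde} vanish identically when evaluated on them, so the PDE collapses to the ODE $\partial_t v = -\Rsc_{g_M}$. Combined with the sign hypothesis $\Rbelow\leq-\Rsc_{g_M}\leq\Rabove$, this immediately identifies $\bar u$ as a supersolution and $\underline u$ as a subsolution of \eqref{eqn:pde}.

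Next I would verify the comparison on the parabolic boundary. On $M_k\times\{0\}$, the construction \eqref{eqn:initialdata} expresses $\mathring u_0$ as a convex combination of the constant $\ubelow$ and the function $u_0\leq\uabove$, so $\underline u(\cdot,0) = \ubelow \leq \mathring u_0 \leq \uabove = \bar u(\cdot,0)$. On $\partial M_k\times[0,T]$, the same bound $\Rbelow\leq-\Rsc_{g_M}\leq\Rabove$ sandwiches the boundary data $\phi(x,t)=\ubelow-t\,\Rsc_{g_M}(x)$ between $\underline u$ and $\bar u$.

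To conclude, I would apply the maximum principle on the compact cylinder $\overline{M_k}\times[0,T]$, perturbing by $\varepsilon>0$ to make the comparison strict. With $\bar u_\varepsilon(x,t) := (\uabove+\varepsilon) + (\Rabove+\varepsilon)t$, any positive maximum of $u-\bar u_\varepsilon$ is excluded from the parabolic boundary and hence must occur at some $(x^\ast,t^\ast)\in M_k\times(0,T]$, where $\nabla u=0$, $\Delta_{g_M} u\leq 0$ and $\partial_t u\geq\Rabove+\varepsilon$; inserted into \eqref{eqn:pde} this forces $\partial_t u\leq-\Rsc_{g_M}\leq\Rabove$, a contradiction. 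Letting $\varepsilon\searrow 0$ gives $u\leq\uabove+\Rabove t$, and the lower bound follows from the symmetric argument applied to $\underline u_\varepsilon(x,t):=(\ubelow-\varepsilon)+(\Rbelow-\varepsilon)t$. The only genuine subtlety is that without the $\varepsilon$-perturbation the extremum analysis produces equalities rather than a contradiction, which is exactly why one works with strict barriers; the spatial constancy of $\bar u$ and $\underline u$, coupled with the positivity $u>0$, ensures that the $u^{-1}$ and $u^{-2}$ coefficients cause no further difficulty.
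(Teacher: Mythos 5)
Your proof is correct and follows essentially the same strategy as the paper: compare $u$ against the spatially constant, affine-in-$t$ barriers $\ubelow+\Rbelow t$ and $\uabove+\Rabove t$, check the comparison on the parabolic boundary, and conclude via a maximum principle. The only stylistic difference is that the paper introduces $w=u-\ubelow-\Rbelow t$, shows $w$ solves a linear parabolic equation with non-negative source, and invokes a stated linear parabolic maximum principle, whereas you prove the weak maximum principle from scratch via the strict $\varepsilon$-barriers and a contradiction at an interior or terminal-time extremum; both routes are standard and equivalent.
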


\begin{proof}
Equation \eqref{eqn:pde} implies that the function $w(\cdot,t)=u(\cdot,t)-\ubelow-\Rbelow t$ satisfies
\begin{align}\label{eqn:180510-1}
\frac{1}{m-1}\frac{\partial w}{\partial t}
-\frac{\Delta_{g_M} w}{u}-\frac{(m-6)}{4u^2}\sk{\nabla u,\nabla w}_{g_M}
&=\frac{-\Rsc_{g_M}-\Rbelow}{m-1}\geq0. 
\end{align}
Since $u>0$, equation \eqref{eqn:180510-1} is uniformly parabolic. 
Moreover, we have $w\geq0$ on $(\partial M_k\times[0,T])\cup(M_k\times\{0\})$. 
Hence, the linear parabolic maximum principle (see \cite[Prop. A.2]{Schulz2016}) implies $w\geq0$ in $M_k\times[0,T]$. 
The proof of the upper bound is analogous. 
\end{proof}

\begin{lem}[global existence on bounded domains]
\label{lem:globalexistence}
If the background scalar curvature satisfies $0\leq\Rbelow\leq-\Rsc_{g_M}\leq\Rabove$ in $M_k$, then there exists a unique solution $0<u\in C^{2;1}(M_k\times\Interval{0,\infty})$ to problem \eqref{eqn:pde} with boundary data \eqref{eqn:boundarydata} and initial data \eqref{eqn:initialdata}. 
\end{lem}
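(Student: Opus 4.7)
The plan is the standard continuation argument for quasilinear parabolic problems. Let $T^{*}\in\interval{0,\infty}$ denote the supremum of times $T$ for which a solution $0<u\in C^{2,\alpha;1,\alpha/2}(\overline{M_k}\times[0,T])$ to \eqref{eqn:pde} exists; the short-time result preceding Lemma \ref{lem:pointwise} guarantees $T^{*}>0$. I would then rule out $T^{*}<\infty$ by establishing uniform a priori bounds up to $T^{*}$ and restarting short-time existence.

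The $C^{0}$ control is already in place: Lemma \ref{lem:pointwise} yields
\[
0<\ubelow\leq u(x,t)\leq \uabove+\Rabove T^{*}
\qquad\text{for all } (x,t)\in\overline{M_k}\times[0,T],\ T<T^{*},
\]
so $u$ stays bounded away from zero and infinity, uniformly in $T<T^{*}$. Consequently the coefficient $1/u$ of the Laplacian in \eqref{eqn:pde} is bounded above and below, rendering the equation uniformly parabolic on $\overline{M_k}\times[0,T]$.

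Next I would obtain a gradient bound. The domain $M_k$ is smoothly bounded, the boundary datum $\phi$ from \eqref{eqn:boundarydata} is smooth in space and affine in time, and the modified initial datum $\mathring{u}_0$ from \eqref{eqn:initialdata} is $C^{2,\alpha}$ with first-order compatibility; hence the Ladyzhenskaya--Solonnikov--Ural\cprime tseva theory for quasilinear parabolic equations with quadratic gradient nonlinearity applies. An attractive alternative is to work with the porous-medium reformulation \eqref{eqn:180609} for $U=u^{\eta}$, whose principal part is linear and uniformly elliptic once $U$ is pinched between positive constants; a Bernstein-type maximum principle applied to $\abs{\nabla U}_{g_M}^{2}$ then delivers a bound depending only on the $C^{0}$ estimate, the geometry of $\overline{M_k}$, and $\nm{\mathring u_0}_{C^{1}}$, $\nm{\phi}_{C^{1}}$. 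With $u$ controlled in $C^{1}$, the right-hand side of \eqref{eqn:pde} is a linear uniformly parabolic operator applied to $u$ whose coefficients lie in $C^{\alpha;\alpha/2}$, so interior and boundary Schauder estimates yield a bound on $\nm{u}_{C^{2,\alpha;1,\alpha/2}(\overline{M_k}\times[0,T])}$ independent of $T<T^{*}$. If $T^{*}$ were finite, this would allow us to pass to the limit $T\nearrow T^{*}$ and invoke the short-time existence result at time $T^{*}$, contradicting maximality. Uniqueness follows exactly as in Lemma \ref{lem:pointwise}: the difference $w=u_1-u_2$ of two solutions satisfies a linear uniformly parabolic equation (whose coefficients depend on $u_1$, $u_2$, $\nabla u_1$, $\nabla u_2$) with $w\equiv0$ on the parabolic boundary, so the parabolic maximum principle forces $w\equiv 0$.

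The main obstacle is the quadratic gradient term $\frac{m-6}{4}\,\abs{\nabla u}_{g_M}^{2}/u^{2}$ in \eqref{eqn:pde}, which prevents a naïve linear-parabolic treatment. The cleanest way around it is the change of variables $U=u^{\eta}$: in the form \eqref{eqn:180609} the bad first-order term disappears, and one is reduced to a quasilinear equation with smooth, uniformly elliptic principal part, for which the required a priori estimates are classical.
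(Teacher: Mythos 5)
Your overall strategy---continuation in time, uniform $C^{0}$ control from Lemma~\ref{lem:pointwise}, a priori Schauder bound, restart at $T^{*}$, uniqueness via the linearized maximum principle---matches the paper's. The one place where you diverge is the intermediate regularity step, and it is worth contrasting the two routes. You propose to first obtain a gradient bound (via LSU theory or a Bernstein maximum principle applied to $\abs{\nabla U}_{g_M}^{2}$) and only then invoke Schauder estimates. The paper bypasses any explicit gradient estimate: it passes to the divergence-form equation \eqref{eqn:divergenceform}
for $v=u^{\eta+1}$, namely $\partial_t v = \operatorname{div}_{g_M}(\tfrac{m-1}{u}\nabla v) + b\,v$ with $b=-\tfrac{(\eta+1)\Rsc_{g_M}}{u}$, observes that Lemma~\ref{lem:pointwise} pins the coefficients $\tfrac{1}{u}$ and $b$ between positive constants, and applies parabolic DeGiorgi--Nash--Moser theory, which requires only \emph{bounded measurable} coefficients, to get a uniform $C^{0,\alpha;0,\alpha/2}$ bound on $u^{\eta+1}$; this in turn makes $\tfrac{1}{u}$ H\"older continuous so the standard linear Schauder theory of Ladyzhenskaya--Solonnikov--Ural'tseva closes the argument. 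The paper's route is cleaner precisely because the step from $C^{0}$ to $C^{0,\alpha}$ never touches $\nabla u$.

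Your Bernstein alternative is not wrong in principle, but as sketched it has a gap: a Bernstein estimate for $\abs{\nabla U}^{2}$ on a bounded domain gives only interior control; near $\partial M_k$ one additionally needs a boundary gradient estimate (e.g.\ by constructing barriers adapted to the boundary datum $\phi$), and one must verify the structure conditions that let the $a'(U)\,\nabla U\cdot\Delta U$ term produced by differentiating $\partial_t U = a(U)\Delta_{g_M}U + f(U)$ be absorbed by the good $-\abs{\nabla^{2}U}^{2}$ term. None of this is insurmountable, but it is genuine extra work that your sketch passes over, and it is exactly what the divergence-form/De Giorgi--Nash--Moser detour is designed to avoid. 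I would also note that the paper establishes uniqueness \emph{first} and uses it to glue local solutions into a single solution on $[0,T_{*})$ before running the continuation argument, whereas you defer uniqueness to the end; logically your order works too, but you should make sure the maximal solution on $[0,T^{*})$ is well-defined before estimating it, which is most easily done by proving uniqueness up front.
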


\begin{proof}
We invoke the same argument as in \cite{Schulz2016}. 
First we show that two positive solutions $u,v\in C^{2,\alpha;1,\frac{\alpha}{2}}(\overline{M_k}\times[0,T])$ of problem~\eqref{eqn:pde} with equal initial and boundary data must agree. We have
\begin{align*}
\frac{1}{m-1}\frac{\partial}{\partial t}(u-v)
&=\frac{\Delta_{g_M} u}{u}-\frac{\Delta_{g_M} v}{v}+\frac{m-6}{4}\Bigl(\frac{\abs{\nabla u}_{g_M}^2}{u^2}-\frac{\abs{\nabla v}_{g_M}^2}{v^2}\Bigr)
\\[1ex]
&=\frac{\Delta_{g_M} (u-v)}{u}+\frac{m-6}{4u^2}\sk[\big]{\nabla(u+v),\nabla(u-v)}_{g_M} 
\\&\hphantom{{}={}}-\frac{\Delta_{g_M} v}{uv}(u-v) -\frac{m-6}{4}\frac{\abs{\nabla v}_{g_M}^2}{u^2v^2}(u+v)(u-v)
\end{align*}
which can be considered as linear parabolic equation for $u-v$ with bounded coefficients because $u,v\in C^{2,\alpha;1,\frac{\alpha}{2}}(\overline{M_k}\times[0,T])$ are uniformly bounded away from zero and from above by Lemma \ref{lem:pointwise} and because $\abs{\nabla u}$, $\abs{\nabla v}$, $\Delta v$ are bounded functions in $\overline{M_k}$.   
Since $(u-v)$ vanishes along $(M_k\times\{0\})\cup(\partial M_k\times[0,T])$, the parabolic maximum principle (see \cite[Prop A.2]{Schulz2016}) implies $u-v=0$ in $\overline{M_k}\times[0,T]$ as claimed.

It remains to show that the solution can be extended globally in time. 
Let $T_*>0$ be the supremum over all $T>0$ such that problem \eqref{eqn:pde} has a solution defined in $M_k\times[0,T]$. 
As shown above, two such solutions agree on their common domain. 
Therefore, there exists a solution $u$ defined on $M_k\times\Interval{0,T_*}$. 
Suppose $T_*<\infty$. 
The function $U=u^{\eta}$ for $\eta=\frac{m-2}{4}$ satisfies equation \eqref{eqn:180609} which can be written in divergence form
\begin{align}
\frac{1}{m-1}\frac{\partial u^{\eta+1}}{\partial t}
\label{eqn:divergenceform}
&=-\frac{(\eta+1)\Rsc_{g_M}}{(m-1)u}u^{\eta+1}+\operatorname{div}_{g_{M}}\Bigl(\frac{1}{u}\nabla u^{\eta+1}\Bigr).  
\end{align}
Since $\abs{\Rsc_{g_M}}\leq\Rabove$ and since $0<\ubelow\leq u\leq \uabove+\Rabove T_*$ by Lemma \ref{lem:pointwise}, equation \eqref{eqn:divergenceform} can be interpreted as linear parabolic equation with uniformly bounded coefficients.  
Hence, parabolic DeGiorgi--Nash--Moser theory \cite[§\,4]{Trudinger1968} applies and yields
\begin{align*}
\nm{u^{\eta+1}}_{C^{0,\alpha;0,\frac{\alpha}{2}}(\overline{M_k}\times[0,T_*])} 
\leq C(m,\ubelow,\uabove,\Rabove,T_{*}) 
\end{align*}
for some Hölder exponent $0<\alpha<1$ and some constant $C$ depending only on the indicated constants. 
Together with Lemma \ref{lem:pointwise} we conclude, that $\frac{1}{u}$ is Hölder continuous
and apply linear parabolic theory \cite[\textsection\,IV.5, Theorem 5.2]{Ladyzenskaja1967} to obtain 
\begin{align*}
\nm{u}_{C^{2,\alpha;1,\frac{\alpha}{2}}(\overline{M_k}\times[0,T_*])} 
\leq C'(m,\ubelow,\uabove,\Rabove,T_{*}).
\end{align*} 
Hence, $u(\cdot,T_*)\in C^{2,\alpha}(\overline{M_k})$ are suitable initial data for problem \eqref{eqn:pde}. The boundary data \eqref{eqn:boundarydata} are defined also for $t\geq T_*$ and are compatible with $u(\cdot,T_*)$ at time $T_*$ Therefore, we can extend the solution in contradiction to the maximality of $T_*$. 
\end{proof}

\section{Scalar curvature estimates}
We assume the background scalar curvature to satisfy $0\leq\Rbelow\leq-\Rsc_{g_M}\leq\Rabove$ in $M$. 
From section \ref{sec:1} we recall the exhaustion of $M$ with smooth, connected, bounded domains $M_1\subset M_2\subset\ldots\subset M$ such that $B_k\subset M_k\subset B_{k+1}$ for every $k\in\mathbb{N}$. 
Let $T>0$ be arbitrary and $k\in\mathbb{N}$. 
By Lemma \ref{lem:globalexistence} there exists a unique solution $u\colon M_k\times[0,T]\to\mathbb{R}$  of problem \eqref{eqn:pde}. 
The goal of this section is to estimate the corresponding scalar curvature $\Rsc_{ug_M}$ independently of the index $k$. 
 
\begin{lem}\label{lem:Rsc} 
Let $u\in C^{2,\alpha;1,\frac{\alpha}{2}}(M_k\times[0,T])$ be a solution to problem~\eqref{eqn:pde}. 
For every $t\in[0,T]$, the scalar curvature $\Rsc_{g(t)}$ of the Riemannian metric $g(t)=u(\cdot,t)g_{M}$ satisfies  
\begin{align*}
\Rsc_{g(t)}\geq-\frac{1}{t} \quad\text{ in }M_k.
\end{align*}
\end{lem}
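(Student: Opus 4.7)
The plan is to apply the linear parabolic maximum principle to the auxiliary function
\[
w(x,t)\vcentcolon=\Rsc_{g(t)}(x)+\tfrac{1}{t}, \qquad (x,t)\in M_k\times(0,T].
\]
Under the Yamabe flow $\partial_t g=-\Rsc_g\,g$, the general variation formula for the scalar curvature applied with $h_{ij}=-\Rsc_g\,g_{ij}$ yields the standard evolution
\[
\frac{\partial \Rsc_g}{\partial t}=(m-1)\Delta_{g(t)}\Rsc_g+\Rsc_g^2,
\]
and factoring $\Rsc_g^2-1/t^2=(\Rsc_g+1/t)(\Rsc_g-1/t)$ rearranges this into the linear parabolic equation
\[
\frac{\partial w}{\partial t}-(m-1)\Delta_{g(t)}w=\Bigl(\Rsc_g-\tfrac{1}{t}\Bigr)\,w.
\]
Lemma~\ref{lem:pointwise} keeps $u$ bounded away from $0$ and $\infty$, so $\Delta_{g(t)}$ is uniformly elliptic with coefficients comparable to those of $\Delta_{g_M}$.

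Next I would verify that $w\geq 0$ on the parabolic boundary of $M_k\times[\tau,T]$ for a suitable small $\tau>0$. Because $u\in C^{2,\alpha;1,\alpha/2}(\overline{M_k}\times[0,T])$, the scalar curvature $\Rsc_g$ is bounded by some constant $C$ on $\overline{M_k}\times[0,T]$, so $w\geq 1/\tau-C>0$ once $\tau<1/C$; this handles the initial strip $M_k\times(0,\tau]$. On the lateral boundary $\partial M_k\times(0,T]$ the identity $\partial_t u=-\Rsc_g\,u$ extends by continuity from the interior, while differentiating the Dirichlet data \eqref{eqn:boundarydata} gives $\partial_t u=-\Rsc_{g_M}$ there, so
\[
\Rsc_g=\frac{\Rsc_{g_M}}{u}=\frac{\Rsc_{g_M}}{\ubelow-t\,\Rsc_{g_M}}\quad\text{on }\partial M_k.
\]
Using $\Rsc_{g_M}\leq 0$ and $\ubelow>0$, the inequality $\Rsc_g\geq-1/t$ reduces to the obvious $0\leq\ubelow$. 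The boundary data \eqref{eqn:boundarydata} are thus tailored precisely so that $w\geq 0$ along $\partial M_k\times(0,T]$.

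With these boundary bounds, the linear parabolic maximum principle applied on $M_k\times[\tau,T]$ closes the argument. At any negative interior minimum $(x_0,t_0)$ of $w$ one would have $\partial_t w\leq 0$ and $\Delta_{g(t)}w\geq 0$, while $w(x_0,t_0)<0$ forces $\Rsc_g(x_0,t_0)<-1/t_0<1/t_0$ and hence $(\Rsc_g-1/t_0)\,w(x_0,t_0)>0$, contradicting the equation. Therefore $w\geq 0$ on $M_k\times[\tau,T]$, which combined with $w>0$ on $M_k\times(0,\tau]$ yields $\Rsc_{g(t)}\geq-1/t$ throughout $M_k\times(0,T]$. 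The main technical hurdle is that the assumed regularity $u\in C^{2,\alpha;1,\alpha/2}$ renders $\Rsc_g$ only Hölder continuous, so the pointwise computation of $\partial_t \Rsc_g$ and $\Delta_{g(t)}\Rsc_g$ is not immediately justified; this is resolved by standard parabolic bootstrap for \eqref{eqn:pde}, which upgrades $u$ to full interior smoothness on $M_k\times(0,T]$ and legitimises all the above manipulations.
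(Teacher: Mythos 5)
Your proof is correct and follows essentially the same strategy as the paper's: compare $\Rsc_{g(t)}$ to a time-dependent subsolution via the evolution equation \eqref{eqn:evoR}, check the comparison on the parabolic boundary using the explicit Dirichlet data \eqref{eqn:boundarydata}, and close with the linear parabolic maximum principle. The only difference is cosmetic: the paper compares against $\frac{1}{\varepsilon+t}$, which is finite at $t=0$ so the maximum principle can be applied on all of $M_k\times[0,T]$ in one stroke and the conclusion $\Rsc_{g(t)}\geq-\frac{1}{\varepsilon+t}>-\frac{1}{t}$ follows directly, whereas you compare against $\frac{1}{t}$, which forces you to split off the strip $(0,\tau]$ (where the bound is trivial since $1/t$ dominates the sup of $\abs{\Rsc_g}$) and run the maximum principle only on $[\tau,T]$. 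Both regularizations are standard and both work; your explicit remark that the formal use of \eqref{eqn:evoR} requires bootstrapping $u$ to interior smoothness for $t>0$ is a point the paper leaves implicit, and is a fair thing to flag.
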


\begin{proof}
Let $w(t)=\frac{1}{\varepsilon+t}$, where $0<\varepsilon<\frac{\ubelow}{\Rabove}$ is chosen such that $\Rsc_{g(0)}>-w(0)$ in $M_k$.
In $M_k\times[0,T]$ we can express the scalar curvature in the form $\Rsc_g=-\frac{1}{u}\frac{\partial u}{\partial t}$. 
In particular, the choice of boundary data \eqref{eqn:boundarydata} implies 
\begin{align*} 
\Rsc_g \vert_{\partial M_k\times[0,T]}&=-\frac{1}{\phi}\frac{\partial\phi}{\partial t}
=-\frac{-\Rsc_{g_M}}{\ubelow-t\,\Rsc_{g_M}}
\geq-\frac{1}{\frac{\ubelow}{\Rabove}+t}
\geq-w(t)
\quad\text{ on $\partial M_k\times[0,T]$.}
\end{align*}
Scalar curvature evolves by (see \cite[Lemma 2.2]{Chow1992})
\begin{align}
\label{eqn:evoR}
\tfrac{\partial}{\partial t}\Rsc_g=(m-1)\Delta_{g}\Rsc_g+\Rsc_g^2
\quad\text{ in $M_k\times[0,T]$.}
\end{align}
Combined with $\frac{\dd w}{\dd t}=-w^2$, we obtain
\begin{align*}
\tfrac{\partial}{\partial t}(\Rsc_g+w)
-(m-1)\Delta_{g}(\Rsc_g+w)-(\Rsc_g-w)(\Rsc_g+w)=0.
\end{align*}
Since $(\Rsc_g-w)\leq\Rsc_g$ is bounded from above in $M_k\times[0,T]$ and since the operator $\Delta_g=\frac{1}{u}\Delta_{g_M}+\frac{m-2}{2}u^{-2}\sk{\nabla u,\nabla\cdot}_{g_M}$  is uniformly elliptic, the inequality $\Rsc_g\geq -w$ in $M_k\times[0,T]$ follows from the parabolic maximum principle (see \cite[Prop. A.2]{Schulz2016}).  
\end{proof}

Integral estimates for the positive part of the scalar curvature can be obtained with the help of Sobolev-type inequalities. 
This technique requires the Yamabe invariant of $(M,g_M)$ to be positive. 

\begin{lem}[Yamabe invariant]\label{lem:Yamabeinvariant}
Let $(M,g)$ be a Riemannian manifold with scalar curvature $\Rsc_g$. 
Then the Yamabe invariant $\Yamabe$ of $(M,g)$ is defined by 
\begin{align*}
\Yamabe(M ,g)&\vcentcolon=\inf\left\{\frac{\displaystyle\int_M \abs{\nabla^{g}f}_g^2\,\dd\mu_g
+\frac{m-2}{4(m-1)}\int_M \Rsc_{g}f^2\,\dd\mu_g}{\displaystyle\Bigl(\int_{M }\abs{f}^{\frac{2m}{m-2}}\,\dd\mu_g\Bigr)^{\frac{m-2}{m}}}\,;~f\in C^{\infty}_{c}(M )\right\}.
\end{align*}
The Yamabe invariants of hyperbolic space $(\Hyp^m,g_{\Hyp^m})$, Euclidean space $(\mathbb{R}^m,g_{\mathbb{R}^m})$ and the round sphere $(\Sp^m,g_{\Sp^{m}})$ coincide. Their value is
\begin{align}\label{eqn:Yamabe-invariant}
\Yamabe(\Hyp^m,g_{\Hyp^m})
=\Yamabe(\mathbb{R}^m,g_{\mathbb{R}^m})
=\Yamabe(\Sp^m,g_{\Sp^{m}})
=\frac{m(m-2)}{4}\abs{\Sp^m}^{\frac{2}{m}}>0.
\end{align}
\end{lem}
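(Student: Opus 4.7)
\medskip

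\noindent\textbf{Proof proposal.} The plan is to reduce all three computations to the classical sharp Sobolev inequality on $\mathbb{R}^m$ via conformal changes of metric.

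\emph{Step 1: conformal invariance of the Yamabe functional.} I would first record the transformation law of the conformal Laplacian $L_g = -\tfrac{4(m-1)}{m-2}\Delta_g + \Rsc_g$ under $\tilde g = \phi^{\frac{4}{m-2}} g$, namely $L_{\tilde g}(\phi^{-1}f) = \phi^{-\frac{m+2}{m-2}}L_g(f)$. Combined with $d\mu_{\tilde g} = \phi^{\frac{2m}{m-2}}d\mu_g$, this yields that substituting $\tilde f = \phi^{-1} f$ leaves both the numerator and the denominator of the Yamabe quotient unchanged. In particular, the infimum $\Yamabe(M,g)$ depends only on the conformal class of $g$ (relative to a fixed manifold $M$), and conformal diffeomorphisms between different manifolds yield equal Yamabe invariants provided the test function classes match.

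\emph{Step 2: linking $\mathbb{R}^m$, $\mathbb{S}^m$ and $\mathbb{H}^m$.} Stereographic projection is a conformal diffeomorphism $\mathbb{S}^m\setminus\{N\}\to\mathbb{R}^m$, and the Poincar{\'e} ball model realises $\mathbb{H}^m$ as the open unit ball $B^m\subset\mathbb{R}^m$ equipped with a metric conformal to $g_{\mathbb{R}^m}$. By Step 1, computing $\Yamabe(\mathbb{S}^m,g_{\mathbb{S}^m})$ restricted to $f\in C^\infty_c(\mathbb{S}^m\setminus\{N\})$ gives the same value as $\Yamabe(\mathbb{R}^m,g_{\mathbb{R}^m})$, and similarly $\Yamabe(\mathbb{H}^m,g_{\mathbb{H}^m})$ equals the Yamabe quotient infimum over $f\in C^\infty_c(B^m)\subset C^\infty_c(\mathbb{R}^m)$, which is a restriction of $\Yamabe(\mathbb{R}^m,g_{\mathbb{R}^m})$.

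\emph{Step 3: matching the infima.} To show the test-class restrictions do not lower the invariant, I would argue by approximation. Removing the pole in stereographic projection is harmless since a single point has vanishing $H^1$-capacity: any $f\in C^\infty_c(\mathbb{S}^m)$ can be multiplied by a logarithmic cutoff vanishing near $N$ to produce a minimising sequence in $C^\infty_c(\mathbb{S}^m\setminus\{N\})$ with Yamabe quotient converging to that of $f$. For the ball $B^m$, the Euclidean Yamabe functional is scale- and translation-invariant: given any $f\in C^\infty_c(\mathbb{R}^m)$, the rescaled function $f_\lambda(x)=f(\lambda x)$ for $\lambda$ large has support in $B^m$ and the same Yamabe quotient as $f$. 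Hence $\Yamabe(B^m,g_{\mathbb{R}^m})\leq \Yamabe(\mathbb{R}^m,g_{\mathbb{R}^m})$, and the reverse inequality is trivial from $C^\infty_c(B^m)\subset C^\infty_c(\mathbb{R}^m)$. Combining both reductions yields $\Yamabe(\mathbb{H}^m,g_{\mathbb{H}^m})=\Yamabe(\mathbb{R}^m,g_{\mathbb{R}^m})=\Yamabe(\mathbb{S}^m,g_{\mathbb{S}^m})$.

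\emph{Step 4: explicit value and positivity.} The quantity $\tfrac{m(m-2)}{4}|\mathbb{S}^m|^{2/m}$ is the sharp Sobolev constant on $\mathbb{R}^m$ established by Aubin and Talenti, attained by the Aubin--Talenti bubbles. Via the conformal identifications above, this is exactly the value of the common Yamabe invariant. Positivity is then manifest. The main subtlety I expect to encounter is keeping the density/approximation argument of Step 3 honest, in particular verifying that the logarithmic cutoff near the stereographic pole indeed has negligible contribution to the Dirichlet energy in the limit; this is a standard $H^1$-capacity argument that I would execute carefully using the fact that points are removable for $H^1$ in dimension $m\geq 3$.
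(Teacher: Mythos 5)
Your proposal is correct and follows essentially the same route as the paper: conformal invariance of the Yamabe quotient, stereographic projection and the ball model to relate $\mathbb{S}^m$, $\mathbb{R}^m$, $\mathbb{H}^m$, vanishing $H^1$-capacity of a point to pass cut-off minimising sequences between domains, and the known sharp constant. The only cosmetic difference is in the final evaluation: you quote the Aubin--Talenti sharp Sobolev constant on $\mathbb{R}^m$, while the paper quotes the equivalent fact that $\Yamabe(\mathbb{S}^m,g_{\mathbb{S}^m})$ is attained by constant functions together with $\Rsc_{g_{\mathbb{S}^m}}=m(m-1)$.
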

\begin{proof}
The Yamabe invariant is a conformal invariant. 
Hyperbolic space is conformally equivalent to the Euclidean ball $B_r$ of any given radius $r>0$. 
Via stereographic projection, the sphere minus a point is conformally equivalent to $(\mathbb{R}^m,g_{\mathbb{R}^m})$. 
The $H^1$-capacity of a point vanishes. 
Hence, a minimising sequence for $\Yamabe(\Hyp^m,g_{\Hyp^m})$ yields  competitors for $\Yamabe(\Sp^m,g_{\Sp^m})$ and vice versa. 
Since $\Yamabe(\Sp^m,g_{\Sp^m})$ is attained for constant functions $w$ and since $\Rsc_{g_{\Sp^m}}=m(m-1)$, the claim follows. 
\end{proof}
 
The definition of Yamabe invariant leads to a Sobolev-type inequality. 
Let $g=u g_{M}$ be any conformal metric on $(M,g_{M})$. 
Let $B\subset M$ be open and $f\in C_c^{1}(B)$. 
Computing 
\begin{align}\label{eqn:180614}
\abs{\nabla^g f}_g^2=\frac{1}{u}\abs{\nabla^{g_M} f}_{g_M}^2
\end{align}
and denoting $\Yamabe\vcentcolon=\Yamabe(M,g_M)$, we obtain 
\begin{align}\notag
&\int_{B}\abs{\nabla^g f}_g^2\,\dd\mu_g
=\int_{B}\abs{\nabla^{g_{M }} f}_{g_{M }}^2u^{\frac{m}{2}-1}\,\dd\mu_{g_{M }}
\\[1ex]
&\geq\inf_{B} u^{\frac{m}{2}-1}\int_{M }\abs{\nabla^{g_{M }}f}_{g_{M }}^2 \,\dd\mu_{g_{M }}
\notag\\[1ex]\label{eqn:180611-2}
&\geq\inf_{B} u^{\frac{m}{2}-1}\biggl(\Yamabe{} 
\Bigl(\int_{M }\abs{f}^{\frac{2m}{m-2}}\,\dd\mu_{g_{M }}\Bigr)^{\frac{m-2}{m}}
-\frac{(m-2)}{4(m-1)}\int_{M }\Rsc_{g_{M }}f^2\,\dd\mu_{g_{M }}\biggr).
\end{align}
As in Lemma \ref{lem:pointwise} we will assume henceforth that the background manifold $(M,g_M)$ has scalar curvature $\Rsc_{g_{M}}$ satisfying
\begin{align}\label{eqn:backgroundcurvature}
0\leq\Rbelow\leq-\Rsc_{g_M}\leq \Rabove<\infty.
\end{align}
Lemma \ref{lem:pointwise} is the main reason for assumption \eqref{eqn:backgroundcurvature} but it also allows us to drop the second term in \eqref{eqn:180611-2} because of its sign to obtain the Sobolev inequality 
\begin{align}\label{eqn:Sobolev}
\int_{B}\abs{\nabla^g f}_g^2\,\dd\mu_g 
&\geq \Bigl(\frac{\inf_{B} u}{\sup_{B} u}\Bigr)^{\frac{m-2}{2}}\Yamabe
\Bigl(\int_{B}\abs{f}^{\frac{2m}{m-2}}\,\dd\mu_{g}\Bigr)^{\frac{m-2}{m}}.
\end{align}
 
\begin{lem}\label{lem:180507-1}
Given $k>4$, let $u\in C^{2,\alpha;1,\frac{\alpha}{2}}(M_k\times[0,T])$ be a solution to problem~\eqref{eqn:pde} and let $\Rsc_{g(t)}$ be the scalar curvature of the Riemannian metric $g(t)=u(\cdot,t)g_{M}$ in $M_k$. 
Let the radius $0<r_0<k-4$ be fixed and let $p>1$ be any exponent. 
Then, for every $t\in[0,T]$, the positive part $\Rsc_+(x,t)=\max\{0,\Rsc_{g(t)}(x)\}$ satisfies 
\begin{align*}
\int_{B_{r_0}}\Rsc_+^p(\cdot,t)\,\dd\mu_{g(t)}\leq C
\end{align*}
where the constant $C$ increases with $p$ and $r_0$ but does not depend on $k$. 
\end{lem}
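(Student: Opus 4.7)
The plan is to derive and then close a differential inequality for the quantity $F(t) := \int_{M_k} \varphi^2 \Rsc_+^p\, d\mu_{g(t)}$, where $\varphi := \chi^{p+1}$ for a standard smooth cut-off $\chi \in C^\infty_c(M_k)$ with $\chi \equiv 1$ on $B_{r_0}$ and $\mathrm{supp}\,\chi \subset B_{r_0+1} \subset B_{k-1}$. Differentiating $F$ in time, using the scalar curvature evolution \eqref{eqn:evoR} together with $\partial_t d\mu_g = -\tfrac{m}{2}\Rsc_g\,d\mu_g$, then integrating the Laplacian term by parts (noting that $\nabla \Rsc_+ = \mathbb{1}_{\{\Rsc_g > 0\}}\nabla \Rsc_g$) and absorbing the arising cross-term by Cauchy--Schwarz, one obtains
\[
F'(t) + c_1(m,p)\int_{M_k}\abs{\nabla^g v}_g^2\,d\mu_g
\leq c_2(m,p)\int_{M_k} \Rsc_+^p\abs{\nabla^g\varphi}_g^2\,d\mu_g
+ \bigl(p - \tfrac{m}{2}\bigr)\int_{M_k}\varphi^2 \Rsc_+^{p+1}\,d\mu_g,
\]
where $v := \varphi \Rsc_+^{p/2}$. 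The specific choice $\varphi = \chi^{p+1}$ yields the pointwise bound $\abs{\nabla^g\varphi}_g^2 \leq C_p\,\varphi^{2p/(p+1)}$, so Young's inequality with conjugate exponents $\tfrac{p+1}{p},\,p+1$ converts the cut-off remainder into $\delta\int\varphi^2 \Rsc_+^{p+1}\,d\mu_g + C(\delta,p)\,\abs{\mathrm{supp}\,\chi}_g$, where the $g$-volume of $\mathrm{supp}\,\chi$ is uniformly bounded in $k$ by Lemma~\ref{lem:pointwise}.

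The reaction term $\int\varphi^2 \Rsc_+^{p+1}$ is treated by rewriting it as $\int v^2 \Rsc_+$, applying Hölder with exponents $\tfrac{m}{m-2},\tfrac{m}{2}$, and invoking the Sobolev inequality~\eqref{eqn:Sobolev} on $B := \mathrm{supp}\,\varphi$, whose constant is uniform in $k$ because $\sup_B u/\inf_B u \leq (\uabove + \Rabove T)/\ubelow$. This yields
\[
\int\varphi^2 \Rsc_+^{p+1}\,d\mu_g \leq C_S\Bigl(\int_B \Rsc_+^{m/2}\,d\mu_g\Bigr)^{2/m}\int\abs{\nabla^g v}_g^2\,d\mu_g.
\]
For the sub-critical range $1 < p < m/2$ the coefficient $p - m/2 < 0$ provides additional coercivity rather than an obstruction, and a further Hölder inequality $F(t)^{(p+1)/p} \leq \abs{B}_g^{1/p}\int\varphi^2 \Rsc_+^{p+1}\,d\mu_g$ reduces the analysis to an ODE inequality of the form $F'(t) + \alpha F(t)^{(p+1)/p} \leq \beta$, forcing $F(t) \leq \max\{F(0),(\beta/\alpha)^{p/(p+1)}\}$. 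The initial value $F(0) = \int\varphi^2 \Rsc_{g_0}^p\,d\mu_{g_0}$ is finite and $k$-independent because $\mathring{u}_0 \equiv u_0 \in C^{2,\alpha}$ on $B \subset B_{k-1}$ by \eqref{eqn:initialdata}, so $\Rsc_{g_0}$ is continuous and bounded on $B$ independently of $k$.

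The main obstacle I anticipate is the super-critical case $p \geq m/2$, for which the reaction coefficient $(p - m/2) \geq 0$ can be absorbed into the gradient coercivity only when $\int_B \Rsc_+^{m/2}\,d\mu_g$ is sufficiently small. My plan is to address this by a bootstrap along an increasing sequence of exponents: once a $k$-independent $L^q$ bound is established for some $q > m/2$, Hölder on small sub-balls gives
\[
\int_{B_\rho(x)} \Rsc_+^{m/2}\,d\mu_g \leq \abs{B_\rho(x)}_g^{1 - m/(2q)}\Bigl(\int_{B_\rho(x)} \Rsc_+^q\,d\mu_g\Bigr)^{m/(2q)},
\]
which can be made arbitrarily small by shrinking $\rho$. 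Covering $B_{r_0}$ by finitely many such balls, applying the previous step on each, and summing then yields the $L^p$ bound for the next exponent with $k$-independent constants. Initiating this iteration, namely passing from the sub-critical estimates to an $L^q$ bound with some $q > m/2$, is the delicate step and the principal technical challenge; it would require a careful analysis of the $p \nearrow m/2$ limit in the sub-critical ODE above, exploiting the gradient coercivity $\int\abs{\nabla^g v}_g^2 \geq c\abs{B}_g^{-2/m} F(t)$ provided by the Sobolev inequality directly.
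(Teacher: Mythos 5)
Your plan reproduces the paper's overall architecture (test with a cut-off power, integrate by parts, use Young on the cut-off remainder and Sobolev for the nonlinear term, then bootstrap), and the sub-critical range $1<p<\frac{m}{2}$ is handled correctly: there the reaction coefficient $p-\frac{m}{2}$ is negative, the remainder $\int \Rsc_+^p \abs{\nabla^g\varphi}_g^2\,d\mu_g$ is bounded via Young and Lemma~\ref{lem:pointwise}, and a uniform bound follows (the paper in fact does something even simpler than your ODE argument: it just drops the coercive reaction term and integrates $\frac{\partial}{\partial t}\int\psi_1\Rsc_+^p\leq C$ in time).

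The genuine gap is exactly where you flag it: the critical exponent $p=\frac{m}{2}$. Your treatment of the reaction term via Sobolev,
\[
\int\varphi^2\Rsc_+^{p+1}\,d\mu_g\leq C_S\Bigl(\int_B\Rsc_+^{\frac{m}{2}}\,d\mu_g\Bigr)^{\frac{2}{m}}\int\abs{\nabla^g v}_g^2\,d\mu_g,
\]
is circular at $p=\frac{m}{2}$ because the factor $\bigl(\int_B\Rsc_+^{m/2}\bigr)^{2/m}$ is the very quantity you are trying to bound, and there is no smallness to exploit. The ``$p\nearrow\frac{m}{2}$ limit'' you gesture at does not help: your sub-critical ODE bound $F\leq\max\{F(0),(\beta/\alpha)^{p/(p+1)}\}$ has $\alpha$ proportional to $\frac{m}{2}-p$, so the bound blows up as $p\nearrow\frac{m}{2}$. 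What the paper does instead, and what you are missing, is a different interpolation of the \emph{cut-off} term rather than of the reaction term: taking $\psi=\varphi^{2/(1-\alpha)}$, H\"older splits $\int\frac{\abs{\nabla\psi}_g^2}{\psi}\Rsc_+^{m/2}$ into a $\bigl(\int(\psi\Rsc_+^{m/2})^{m/(m-2)}\bigr)^{(m-2)\alpha/m}$ factor (absorbed, after Young, into the coercive Sobolev term $-\int\abs{\nabla(\psi\Rsc_+^{m/2})^{1/2}}_g^2$) and a lower-order factor $\int_{B_{r_0+3}}\Rsc_+^{p'}$ with $p'=\frac{m}{2}\cdot\frac{1-\alpha}{1-\frac{m-2}{m}\alpha}$. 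Choosing $\alpha=\frac{m}{m+8}$ makes $p'=\frac{2m}{5}<\frac{m}{2}$, so the lower-order factor is controlled by the already-established sub-critical estimate. That non-circular reduction to a strictly sub-critical norm is the missing ingredient.

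Your proposed super-critical bootstrap via small balls is plausible in spirit but also not worked out; the paper avoids it by first passing from $p=\frac{m}{2}$ to $p=\beta\frac{m}{2}$ with $\beta>1$ close to $1$ (so that $(p-\frac{m}{2})\nm{\Rsc_+}_{L^{m/2}}$ is small enough to be absorbed without shrinking the domain), and then from $p=\beta\frac{m}{2}$ to arbitrary $p>\frac{m}{2}$ by another H\"older--Young interpolation. Either route could likely be made to work once the critical case is in place, but as written your argument leaves both the critical step and the initialisation of the bootstrap open.
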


\begin{proof}
For any exponent $p>1$ and any $\psi\in C_c^{\infty}(M_k)$, we have 
\begin{align}\notag
&\frac{\partial}{\partial t}\int_{M_k}\psi\Rsc_+^p\,\dd\mu_g
\\[1ex]\notag
&=p\int_{M_k}\psi\Rsc_+^{p-1}\bigl(\Rsc^2+(m-1)\Delta_g\Rsc\bigr)\,\dd\mu_g
-\frac{m}{2}\int_{M_k}\psi\Rsc_+^p\Rsc\,\dd\mu_g
\\[1ex]\notag
&=(p-\tfrac{m}{2})\int_{M_k}\psi\Rsc_+^{p+1}\,\dd\mu_g
+p(m-1)\int_{M_k}\psi\Rsc_+^{p-1}\Delta_g\Rsc\,\dd\mu_g
\\[1ex]\label{eqn:180502-1}
&=(p-\tfrac{m}{2})\int_{M_k}\psi\Rsc_+^{p+1}\,\dd\mu_g
\\&\hphantom{{}={}}\label{eqn:180502-2}
+(m-1)\Bigl(-\int_{M_k}\sk{\nabla\psi,\nabla\Rsc_+^p}_g\,\dd\mu_g
-\frac{4(p-1)}{p}\int_{M_k}\psi\abs{\nabla\Rsc_+^{\frac{p}{2}}}_g^2\,\dd\mu_g\Bigr).
\end{align}
The strategy of the proof is as follows. 
\begin{enumerate}[label={\emph{Step \arabic*.}},wide]
\item If $1<p<\frac{m}{2}$ then the negative sign of \eqref{eqn:180502-1} leads to a bound uniformly in $k$. 
\item We use the estimate obtained in the first step to deal with the case $p=\frac{m}{2}$. 
\item The bound from the second step can be extended to $p=\beta\frac{m}{2}$ for some $\beta>1$. 
\item Using the estimate for $p=\beta\frac{m}{2}$ we obtain a bound with any exponent $p>\frac{m}{2}$. 
\end{enumerate}
In each step we choose a different cut-off function $\psi$ and apply different estimates to control the terms in \eqref{eqn:180502-1} and \eqref{eqn:180502-2}. 
Young's inequality appears frequently in either of the following forms. 
\begin{alignat}{3}
\forall a,b&\geq0 \quad &&\forall 0<s<1: ~ &
a^s b^{1-s}&\leq s a+(1-s)b, 
\label{eqn:Young1}
\\[1ex]
\forall b,c&\geq0 \quad  &&\forall x\in\mathbb{R}: &
bx-c x^2&\leq\tfrac{b^2}{4c}.
\label{eqn:Young2}
\end{alignat}

\emph{Step 1.}  
Let $\varphi_1\in C_c^{\infty}(B_{r_0+4})$ be a cut-off function such that $0\leq\varphi_1\leq1$, such that $\varphi_1|_{B_{r_0+3}}\equiv1$ and such that $\abs{\nabla \varphi_1}_{g_{M}}\leq 2$. 
Given $1<p<\frac{m}{2}$, we set 
\begin{align*}
\psi_1=\varphi_1^{2(p+1)}.
\end{align*}
This cut-off function has the property that
\begin{align*}
\abs{\nabla\psi_1}_{g_{M }}^2
&\leq\bigr(4(p+1)\bigr)^2\varphi_1^{4p+2}
=\bigr(4(p+1)\bigr)^2\psi_1^{1+\frac{p}{p+1}}.
\end{align*} 
Choosing $\psi=\psi_1$ and recalling 
\begin{align*}
\abs{\nabla\psi_1}_g^2=\frac{1}{u}\abs{\nabla\psi_1}_{g_M}^2
\end{align*}
the terms in \eqref{eqn:180502-2} can be estimated as follows using Young's inequality and H\"older's inequality:  
\begin{align}\notag
&-\int_{M_k}\sk{\nabla\psi_1,\nabla\Rsc_+^p}_g\,\dd\mu_g
-\frac{4(p-1)}{p}\int_{M_k}\psi_1\abs{\nabla\Rsc_+^{\frac{p}{2}}}_g^2\,\dd\mu_g
\\[1ex]\notag
&\leq\frac{p}{4(p-1)}\int_{M_k}\frac{\abs{\nabla\psi_1}_g^2}{\psi_1}\,\Rsc_+^p\,\dd\mu_g
\\[1ex]\notag
&\leq\frac{4p(p+1)^2}{(p-1)}\int_{B_{r_0+4}}\frac{\psi_1^{\frac{p}{p+1}}}{u}\,\Rsc_+^p \,\dd\mu_g
\\[1ex]\notag
&\leq\frac{4p(p+1)^2}{(p-1)}\Bigl(\int_{B_{r_0+4}}u^{\frac{m}{2}-p-1}\,\dd\mu_{g_{M }}\Bigr)^{\frac{1}{p+1}}
\Bigl(\int_{M_k}\psi_1\Rsc_+^{p+1}\,\dd\mu_g\Bigr)^{\frac{p}{p+1}}
\\[1ex]\label{eqn:180611-1}
&\leq\frac{{(4p(p+1)^2)}^{p+1}}{(p+1)\lambda^p(p-1)^{p+1}}
\Bigl(\int_{B_{r_0+4}}u^{\frac{m}{2}-p-1}\,\dd\mu_{g_{M }}\Bigr)
+\frac{\lambda p}{p+1}\Bigl(\int_{M_k}\psi_1\Rsc_+^{p+1}\,\dd\mu_g\Bigr)
\end{align}
where the parameter $\lambda>0$ is arbitrary. 
If we choose $1<p<\frac{m}{2}$ and $\lambda=\frac{(\frac{m}{2}-p)(p+1)}{(m-1)p}$, then we obtain
\begin{align}\label{eqn:180430}
\frac{\partial}{\partial t}\int_{M_k}\psi_1\Rsc_+^p\,\dd\mu_g
&\leq C_{m,p}\Bigl(\int_{B_{r_0+4}}u^{\frac{m}{2}-p-1}\,\dd\mu_{g_{M }}\Bigr). 
\end{align}
Moreover, since $u(\cdot,t)\geq \ubelow+\Rbelow t$ by Lemma \ref{lem:pointwise}, the right hand side of \eqref{eqn:180430} is integrable in $t\in[0,T]$ if $1<p<\frac{m}{2}$. 
In particular, for $p=\frac{2m}{5}\in\interval{1,\frac{m}{2}}$ we obtain 
\begin{align}\label{eqn:180502-3}
\int_{B_{r_0+3}}\Rsc_{+}^{\frac{2m}{5}}\,\dd\mu_g
&\leq\int_{B_{r_0+4}}\abs{\Rsc_{g(0)}}^{\frac{2m}{5}}\,\dd\mu_{g(0)}
+C_{m,r_0,T}. 
\end{align}

\emph{Step 2.} For any regular, non-negative functions $\psi,F$ and any exponent $p$, there holds
\begin{align}
\abs[\big]{\nabla(\psi F^p)^{\frac{1}{2}}}^2
&=\abs[\big]{\tfrac{1}{2}\psi^{-\frac{1}{2}}F^{\frac{p}{2}}\nabla\psi+\psi^{\frac{1}{2}}\nabla F^{\frac{p}{2}}}^2
\notag\\
&=\frac{ F^p}{4\psi}\abs{\nabla\psi}^2
+ F^{\frac{p}{2}}\sk{\nabla\psi,\nabla F^{\frac{p}{2}}}
+\psi\abs{\nabla F^{\frac{p}{2}}}^2.  
\notag
\shortintertext{Therefore, }
-\frac{1}{2}\sk{\nabla\psi,\nabla\Rsc_+^{p}}_g
&=\frac{\abs{\nabla\psi}_g^2}{4\psi}\Rsc_+^p
-\abs[\big]{\nabla(\psi\Rsc_+^p)^{\frac{1}{2}}}_g^2
+\psi\abs{\nabla\Rsc_+^{\frac{p}{2}}}_g^2,  
\label{eqn:skpsiRsc}
\end{align}
where $\psi$ is a new cut-off function to be chosen. 
Given $p\geq\frac{m}{2}\geq\frac{3}{2}$, we use \eqref{eqn:skpsiRsc} to estimate the terms in \eqref{eqn:180502-2} by 
\begin{align*} 
&-\int_{M_k}\sk{\nabla\psi,\nabla\Rsc_+^p}_g\,\dd\mu_g
-\frac{4(p-1)}{p}\int_{M_k}\psi\abs{\nabla\Rsc_+^{\frac{p}{2}}}_g^2\,\dd\mu_g
\\[1ex] 
&=-\frac{1}{2}\int_{M_k}\sk{\nabla\psi,\nabla\Rsc_+^p}_g\,\dd\mu_g
+\Bigl(1-\frac{4(p-1)}{p}\Bigr)\int_{M_k}\psi\abs{\nabla\Rsc_+^{\frac{p}{2}}}_g^2\,\dd\mu_g
\\&\hphantom{{}={}} 
-\int_{M_k}\abs[\big]{\nabla(\psi\Rsc_+^p)^{\frac{1}{2}}}_g^2\,\dd\mu_g+\frac{1}{4}\int_{M_k}\frac{\abs{\nabla\psi}^2_g}{\psi}\Rsc_+^p\,\dd\mu_g
\\[1ex]
&\leq-\int_{M_k}\abs[\big]{\nabla(\psi\Rsc_+^p)^{\frac{1}{2}}}_g^2\,\dd\mu_g+\int_{M_k}\frac{\abs{\nabla\psi}^2_g}{\psi}\Rsc_+^p\,\dd\mu_g.
\end{align*} 
where we used $(1-\tfrac{4(p-1)}{p})\leq-\frac{1}{3}$ and applied Young's inequality in the form \eqref{eqn:Young2}. 
Consequently, for any $p\geq\frac{m}{2}$
\begin{align}
\frac{\partial}{\partial t}\int_{M_k}\psi\Rsc_+^p\,\dd\mu_g
&\leq(p-\tfrac{m}{2})\int_{M_k}\psi\Rsc_+^{p+1}\,\dd\mu_g
-(m-1)\int_{M_k}\abs[\big]{\nabla(\psi\Rsc_+^p)^{\frac{1}{2}}}^2\,\dd\mu_g
\notag\\&\hphantom{{}={}}\label{eqn:180504-1}
+(m-1)\int_{M_k}\frac{\abs{\nabla\psi}^2_g}{\psi}\Rsc_+^p\,\dd\mu_g. 
\end{align}
Let $\varphi_2\in C_c^{\infty}(B_{r_0+3})$ be a cut-off function such that $0\leq\varphi_2\leq1$, such that $\varphi_2|_{B_{r_0+2}}\equiv1$ and such that $\abs{\nabla \varphi_2}_{g_{M }}\leq 2$. 
Given $0<\alpha<1$, we set $\psi_2=\varphi_2^{\frac{2}{1-\alpha}}$. 
Then, 
\begin{align*}
\abs{\nabla\psi_2}_{g}^2
&=\frac{1}{u}\abs{\nabla\psi_2}_{g_{M }}^2
\leq\frac{1}{\ubelow}\Bigr(\frac{4}{1-\alpha}\Bigr)^2\varphi_2^{\frac{4}{1-\alpha}-2}
=\frac{1}{\ubelow}\Bigr(\frac{4}{1-\alpha}\Bigr)^2\psi_2^{1+\alpha}.
\end{align*}
With this choice of $\psi_2$ and any $\lambda>0$, H\"older's inequality and Young's inequality in the form \eqref{eqn:Young1} yield
\begin{align}\notag
&\int_{M_k}\frac{\abs{\nabla\psi_2}_g^2}{\psi_2}\Rsc_+^p\,\dd\mu_g
\\[1ex]\notag
&\leq \frac{1}{\ubelow}\Bigr(\frac{4}{1-\alpha}\Bigr)^2
\int_{M_k}\psi_2^{\alpha}\Rsc_+^p\,\dd\mu_g
\\[1ex]\notag
&\leq \frac{1}{\ubelow}\Bigr(\frac{4}{1-\alpha}\Bigr)^2\Bigl(\int_{M_k}(\psi_2\Rsc_+^p)^{\frac{m}{m-2}}\,\dd\mu_g\Bigr)^{\frac{m-2}{m}\alpha}
\Bigl(\int_{B_{r_0+3}}\Rsc_+^{p\frac{1-\alpha}{1-\frac{m-2}{m}\alpha}}\,\dd\mu_g\Bigr)^{1-\frac{m-2}{m}\alpha}
\\[1ex]\label{eqn:180504-2}
&\leq \alpha \lambda \Bigl(\int_{M_k}(\psi_2\Rsc_+^p)^{\frac{m}{m-2}}\,\dd\mu_g\Bigr)^{\frac{m-2}{m}}
\\&\hphantom{{}={}}\notag
+(1-\alpha)\lambda^{-\frac{\alpha}{1-\alpha}}
\ubelow^{-\frac{1}{1-\alpha}}
\Bigr(\frac{4}{1-\alpha}\Bigr)^{\frac{2}{1-\alpha}}\Bigl(\int_{B_{r_0+3}}\Rsc_+^{p\frac{1-\alpha}{1-\frac{m-2}{m}\alpha}}\,\dd\mu_g\Bigr)^{\frac{1-\frac{m-2}{m}\alpha}{1-\alpha}}. 
\end{align}
Restricting to the case $p=\frac{m}{2}$, we choose $\alpha=\frac{m}{m+8}$ such that $\frac{1-\alpha}{1-\frac{m-2}{m}\alpha}=\frac{4}{5}$. 
Then we choose $\lambda>0$ such that 
\begin{align*}
\alpha\lambda\leq\Bigl(\frac{\inf u}{\sup u}\Bigr)^{\frac{m-2}{2}}\Yamabe,
\end{align*}
where we again depend on the uniform upper and lower bound on $u$ from Lemma~\ref{lem:pointwise}. 
With these choices and the Sobolev estimate \eqref{eqn:Sobolev}, we obtain 
\begin{align*}
\frac{\partial}{\partial t}\int_{M_k}\psi_2\Rsc_+^{\frac{m}{2}}\,\dd\mu_g
&\leq C\Bigl(\int_{B_{r_0+3}}\Rsc_+^{\frac{2m}{5}}\,\dd\mu_g\Bigr)^{\frac{5}{4}}. 
\end{align*}
In particular, using \eqref{eqn:180502-3} from step 1, we conclude
\begin{align}
\int_{B_{r_0+2}}\Rsc_+^{\frac{m}{2}}\,\dd\mu_g\leq C_{m,r_0,T}
\end{align}
where the constant does not depend on $r$.

\emph{Step 3.} 
Let $\psi_3\in C_c^{\infty}(B_{r_0+2})$ be a cut-off function such that $\psi_3|_{B_{r_0+1}}\equiv 1$. 
This step is based on the estimate 
\begin{align*}
(p-\tfrac{m}{2})\int_{M_k}\psi_3\Rsc_+^{p+1}\,\dd\mu_g
&\leq (p-\tfrac{m}{2})
\Bigl(\int_{B_{r_0+2}}\Rsc_+^{\frac{m}{2}}\,\dd\mu_g\Bigr)^{\frac{2}{m}}
\Bigl(\int(\psi_3\Rsc_+^p)^{\frac{m}{m-2}}\,\dd\mu_g\Bigr)^{\frac{m-2}{m}}. 
\end{align*}
By step 2, $\nm{\Rsc_+}_{L^{\frac{m}{2}}(B_{r_0+2})}$ is bounded uniformly in $r$. 
If $p=\beta\frac{m}{2}$ with $\beta>1$ sufficiently close to $1$, then 
\begin{align*}
(p-\tfrac{m}{2})\nm{\Rsc_+}_{L^{\frac{m}{2}}(B_{r_0+2})}\leq \frac{m-1}{2}\Bigl(\frac{\inf u}{\sup u}\Bigr)^{\frac{m-2}{2}}\Yamabe
\end{align*}
and we can conclude as in step 2.

\emph{Step 4.} Let $\varphi_4\in C_c^{\infty}(B_{r_0+1})$ be a cut-off function such that $0\leq\varphi_4\leq1$, such that $\varphi_4|_{B_{r_0}}\equiv1$ and such that $\abs{\nabla \varphi_4}_{g_{M }}\leq 2$. 
As in step 2, we set $\psi_4=\varphi_4^{\frac{2}{1-\alpha}}$ with $1<\alpha<1$.

We apply Lemma \ref{lem:Yamabeinvariant} to estimate \eqref{eqn:180504-1} and obtain 
\begin{align}\notag
&\frac{\partial}{\partial t}\int_{M_k}\psi_4\Rsc_+^p\,\dd\mu_g
\\\notag
&\leq(p-\tfrac{m}{2}+\tfrac{m-2}{4})\int_{M_k}\psi_4\Rsc_+^{p+1}\,\dd\mu_g
-(m-1)\Yamabe\Bigl(\int_{M_k}(\psi_4\Rsc_+^p)^{\frac{m}{m-2}}\,\dd\mu_g\Bigr)^{\frac{m-2}{m}}
\\&\hphantom{{}={}} \label{eqn:180504-3}
+(m-1)\int_{M_k}\frac{\abs{\nabla\psi_4}^2_g}{\psi_4}\Rsc_+^p\,\dd\mu_g.
\end{align}
As in shown in \eqref{eqn:180504-2} we have 
\begin{align}\notag
&\int_{M_k}\frac{\abs{\nabla\psi_4}_g^2}{\psi_4}\Rsc_+^p\,\dd\mu_g
\leq \alpha \lambda \Bigl(\int_{M_k}(\psi_4\Rsc_+^p)^{\frac{m}{m-2}}\,\dd\mu_g\Bigr)^{\frac{m-2}{m}}
\\&\hphantom{{}={}} \label{eqn:180504-4}
+(1-\alpha)\lambda^{-\frac{\alpha}{1-\alpha}}
\ubelow^{-\frac{1}{1-\alpha}}
\Bigr(\frac{4}{1-\alpha}\Bigr)^{\frac{2}{1-\alpha}}\Bigl(\int_{B_{r_0+1}}\Rsc_+^{p\frac{1-\alpha}{1-\frac{m-2}{m}\alpha}}\,\dd\mu_g\Bigr)^{\frac{1-\frac{m-2}{m}\alpha}{1-\alpha}}. 
\end{align}
for any $\lambda>0$. 
This time we choose $0<\alpha<1$ depending on $m$ and $p$ such that 
\begin{align*} 
p\frac{1-\alpha}{1-\frac{m-2}{m}\alpha}=\frac{m}{2}.
\end{align*}
Then, we choose $\lambda=\frac{1}{2\alpha}\Yamabe>0$. 
Let $\beta>1$ as in step 3. 
H\"older's inequality and Young's inequality yield  
\begin{align}\notag
&\int_{M_k}\psi\Rsc_+^{p+1}\,\dd\mu_g
\\\notag  
&\leq\Bigl(\int_{M_k}(\psi\Rsc_+^{p})^{\frac{m}{m-2}}\,\dd\mu_g\Bigr)^{\frac{m-2}{m}\gamma}
\Bigl(\int_{M_k}\psi^{\frac{1-\gamma}{1-\frac{m-2}{m}\gamma}}\Rsc_+^{\beta\frac{m}{2}}\,\dd\mu_g\Bigr)^{1-\frac{m-2}{m}\gamma}
\\\label{eqn:180504-5} 
&\leq\gamma\delta\Bigl(\int_{M_k}(\psi\Rsc_+^{p})^{\frac{m}{m-2}}\,\dd\mu_g\Bigr)^{\frac{m-2}{m}}
+(1-\gamma)\delta^{-\frac{\gamma}{1-\gamma}}
\Bigl(\int_{B_{r_0+1}} \Rsc_+^{\beta\frac{m}{2}}\,\dd\mu_g\Bigr)^{\frac{1-\frac{m-2}{m}\gamma}{1-\gamma}}
\end{align}
where $\delta>0$ is arbitrary but $0<\gamma<1$ must satisfy $p+1=p\gamma+\beta(\frac{m}{2}-\frac{m-2}{2}\gamma)$. 
We solve the equation for
\begin{align*}
\gamma&=\frac{p-\beta\frac{m}{2}+1}{p-\beta\frac{m}{2}+\beta} 
\end{align*}
which indeed satisfies $0<\gamma<1$ since $\beta>1$ and compute 
\begin{align*}
\frac{1-\frac{m-2}{m}\gamma}{1-\gamma}
&= \frac{2(p-\frac{m}{2}+1)}{m(\beta-1)}. 
\end{align*}
Finally we chose $\delta>0$ such that 
$(p-\tfrac{m}{2}-\tfrac{m-2}{4})\gamma\delta<\tfrac{1}{2}(m-1)\Yamabe$ 
and combine \eqref{eqn:180504-3}, \eqref{eqn:180504-4} and \eqref{eqn:180504-5} to
\begin{align}\notag
&\frac{\partial}{\partial t}\int_{M_k}\psi_4\Rsc_+^p\,\dd\mu_g
\\
&\leq C_{m,p,\ubelow}\Bigl(\int_{B_{r_0+1}}\Rsc_+^{\frac{m}{2}}\,\dd\mu_g\Bigr)^{\frac{2p}{m}}
+ C_{m,p,r_0}\Bigl(\int_{B_{r_0+1}}\Rsc_+^{\beta\frac{m}{2}}\,\dd\mu_g\Bigr)^{\frac{2(p-\frac{m}{2}+1)}{m(\beta-1)}}
\end{align}
With the estimates from steps 2 and 3, the claim follows. 
\end{proof}

\begin{lem}\label{lem:180611}
Given $k>4$, let $u\in C^{2,\alpha;1,\frac{\alpha}{2}}(M_k\times[0,T])$ be a solution to problem~\eqref{eqn:pde} and let $\Rsc_{g(t)}$ be the scalar curvature of the Riemannian metric $g(t)=u(\cdot,t)g_{M}$ in $M_k$. 
Let $0<r_0<k-4$ be fixed and let $p>\frac{m}{2}$ be any exponent. 
Then, for every $t\in[0,T]$ 
\begin{align*}
\int_{B_{r_0}}\abs{\Rsc_{g(t)}}^p\,\dd\mu_{g(t)}\leq C
\end{align*}
where the constant $C$ increases with $p$ and $r_0$ but does not depend on $k$. 
\end{lem}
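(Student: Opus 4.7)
The plan is to split the integrand as $\abs{\Rsc_{g(t)}}^p = \Rsc_+^p + \Rsc_-^p$, with $\Rsc_- = \max\{-\Rsc_g,0\}$. Since the two parts have disjoint support, it suffices to bound each integral separately using tools already established in the paper. For the positive part, the hypothesis $p > \tfrac{m}{2} > 1$ means Lemma \ref{lem:180507-1} applies directly and delivers a $k$-independent bound $\int_{B_{r_0}}\Rsc_+^p\,\dd\mu_g \leq C$. No new work is required here.

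For the negative part, I would combine Lemma \ref{lem:Rsc}, which gives the pointwise bound $\Rsc_- \leq 1/t$ for $t > 0$, with the upper bound $u(\cdot,t) \leq \uabove + \Rabove t$ from Lemma \ref{lem:pointwise}. The latter yields a control on the conformal volume element:
\begin{align*}
\mu_{g(t)}(B_{r_0}) = \int_{B_{r_0}} u^{m/2}\,\dd\mu_{g_M} \leq (\uabove + \Rabove T)^{m/2}\,\mu_{g_M}(B_{r_0}).
\end{align*}
Since $(M,g_M)$ is complete, Hopf--Rinow guarantees that $\overline{B_{r_0}}$ is compact, so $\mu_{g_M}(B_{r_0})$ is a finite constant depending only on $r_0$ and the background geometry, not on $k$. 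Combining these ingredients gives
\begin{align*}
\int_{B_{r_0}}\Rsc_-^p\,\dd\mu_g \leq t^{-p}\,(\uabove + \Rabove T)^{m/2}\,\mu_{g_M}(B_{r_0}),
\end{align*}
which is again independent of $k$.

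It remains to address the endpoint $t=0$, where the $t^{-p}$ bound degenerates. Here the choice $r_0 < k-4$ is used: $B_{r_0} \subset B_{k-1}$, and on $B_{k-1}$ the modified initial datum $\mathring{u}_0$ coincides with $u_0$ by \eqref{eqn:initialdata}. Therefore $\Rsc_{g(0)}|_{B_{r_0}}$ is simply the scalar curvature of $u_0 g_M$ restricted to the compact set $\overline{B_{r_0}}$, which is bounded by a constant depending only on $\nm{u_0}_{C^2(\overline{B_{r_0}})}$ and the restriction of $g_M$ to $\overline{B_{r_0}}$ — neither of which involves $k$. I do not anticipate any real obstacle: the heavy lifting has been absorbed into Lemmas \ref{lem:Rsc}, \ref{lem:pointwise} and \ref{lem:180507-1}, and the only feature to keep track of is that the resulting constant $C$ depends on $t$ (it behaves like $t^{-p}$ away from zero) and on the initial and background data, but is uniform in the exhaustion index $k$, as the statement requires.
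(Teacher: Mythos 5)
Your treatment of the positive part is exactly right: Lemma~\ref{lem:180507-1} applies since $p>\tfrac{m}{2}>1$ and gives a $k$-independent bound. The gap is in the negative part. Combining the pointwise bound $\Rsc_-\leq\tfrac1t$ from Lemma~\ref{lem:Rsc} with the volume control from Lemma~\ref{lem:pointwise} does give, for each fixed $t>0$, a $k$-independent constant $C(t)\sim t^{-p}$, and at $t=0$ the initial curvature on the compact set $\overline{B_{r_0}}$ is bounded because $\mathring u_0=u_0$ there. But these two facts do not give a single constant $C$ valid on all of $[0,T]$: your bound blows up as $t\searrow0$, and the $t=0$ observation does not propagate to small positive $t$ — the implicit continuity that would bridge $(0,\varepsilon)$ has no $k$-independent modulus without further work. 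Uniformity in $t$ is needed: the lemma is later used to show $\{U_k\}$ is bounded in $W^{1,p}(M_1\times[0,T])$ via $\partial_t U_k=-\eta U_k\Rsc_{u_kg_M}$, which requires $\int_0^T\nm{\Rsc}^p_{L^p(M_1)}\,\dd t<\infty$; with a $t^{-p}$ bound and $p>\tfrac m2\geq\tfrac32$ this integral diverges.

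The paper instead writes the analogue of \eqref{eqn:180502-1}--\eqref{eqn:180502-2} for $\Rsc_-$ and exploits that the nonlinear term now carries the coefficient $(-p+\tfrac m2)$, which is strictly \emph{negative} precisely when $p>\tfrac m2$. After choosing the same cut-off $\psi=\varphi^{2(p+1)}$ as in Step~1 of Lemma~\ref{lem:180507-1} and applying the estimate \eqref{eqn:180611-1} with $\lambda=\tfrac{(p-\frac m2)(p+1)}{(m-1)p}$, the $\Rsc_-^{p+1}$ term is absorbed, leaving
\begin{align*}
\frac{\partial}{\partial t}\int_{M_k}\psi\Rsc_-^p\,\dd\mu_g\leq C_{m,p}\int_{B_{r_0+4}}u^{\frac m2-p-1}\,\dd\mu_{g_M},
\end{align*}
whose right-hand side is bounded by $\ubelow^{\frac m2-p-1}\mu_{g_M}(B_{r_0+4})$ uniformly in $t$. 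Integrating from $0$ to $t$ gives a bound that is uniform on $[0,T]$ and depends only on the initial data on $B_{r_0+4}$ and on $(m,p,r_0,\ubelow,T)$. Your shortcut via Lemma~\ref{lem:Rsc} is elegant but loses exactly this uniformity near $t=0$; the differential inequality is what delivers it.
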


\begin{proof}
In view of Lemma \ref{lem:180507-1}, it remains to prove a similar estimate for the negative part $\Rsc_-(x,t)=\max\{0,-\Rsc_{g(t)}(x)\}$. 
Since $\Rsc=\Rsc_+-\Rsc_-$, we have 
\begin{align}\notag
&\frac{\partial}{\partial t}\int_{M_k}\psi\Rsc_-^p\,\dd\mu_g
\\[1ex]\notag
&=p\int_{M_k}\psi\Rsc_-^{p-1}\bigl(-\Rsc^2-(m-1)\Delta_g\Rsc\bigr)\,\dd\mu_g
-\frac{m}{2}\int_{M_k}\psi\Rsc_-^{p}\Rsc\,\dd\mu_g
\\[1ex]\notag
&=(-p+\tfrac{m}{2})\int_{M_k}\psi\Rsc_-^{p+1}\,\dd\mu_g
-p(m-1)\int_{M_k}\psi\Rsc_-^{p-1}\Delta_g\Rsc\,\dd\mu_g
\\[1ex]\notag
&=(-p+\tfrac{m}{2})\int_{M_k}\psi\Rsc_-^{p+1}\,\dd\mu_g
\\&\hphantom{{}={}}\notag
+p(m-1)\Bigl(\int_{M_k} \Rsc_-^{p-1}\sk{\nabla\psi,\nabla\Rsc}_g
+\psi\sk{\nabla\Rsc_-^{p-1},\nabla\Rsc}
\,\dd\mu_g\Bigr)
\\[1ex]\notag
&=(-p+\tfrac{m}{2})\int_{M_k}\psi\Rsc_-^{p+1}\,\dd\mu_g
\\&\hphantom{{}={}}\label{eqn:180615-1}
+(m-1)\Bigl(-\int_{M_k} \sk{\nabla\psi,\nabla\Rsc_-^p}_g\,\dd\mu_g
-\frac{4(p-1)}{p}\int_{M_k}\psi\abs{\nabla\Rsc_-^{\frac{p}{2}}}_g^2\,\dd\mu_g\Bigr).
\end{align}
We choose $\psi=\varphi^{2(p+1)}$, where $\varphi\in C_c^{\infty}(B_{r_0+4})$ satisfies $0\leq\varphi\leq1$, $\varphi|_{B_{r_0}}\equiv1$ and $\abs{\nabla \varphi}_{g_{M}}\leq 2$ as in step 1 of the proof of Lemma \ref{lem:180507-1} and estimate 
\eqref{eqn:180615-1} as in \eqref{eqn:180611-1}. Thus, 
\begin{align*}
&\frac{\partial}{\partial t}\int_{M_k}\psi\Rsc_-^p\,\dd\mu_g
\\[1ex]
&\leq\Bigl(\frac{(m-1)\lambda p}{p+1}-(p-\tfrac{m}{2})\Bigr)\int_{M_k}\psi\Rsc_-^{p+1}\,\dd\mu_g
+\frac{C_{m,p}}{\lambda^p}\Bigl(\int_{B_{r_0+4}}u^{\frac{m}{2}-p-1}\,\dd\mu_{g_{M }}\Bigr)
\end{align*}
where the parameter $\lambda>0$ is arbitrary. 
Since $p>\frac{m}{2}$ we may choose   
$\lambda=\frac{(p-\frac{m}{2})(p+1)}{(m-1)p}$ to obtain
\begin{align*}
\frac{\partial}{\partial t}\int_{M_k}\psi\Rsc_-^p\,\dd\mu_g
&\leq C_{m,p}
\Bigl(\int_{B_{r_0+4}}u^{\frac{m}{2}-p-1}\,\dd\mu_{g_{M }}\Bigr). 
\end{align*}
Since $u\geq\ubelow>0$ by Lemma \ref{lem:pointwise}, we conclude
\begin{align*}
\int_{B_{r_0}}\Rsc_-^{p}(\cdot,t)\,\dd\mu_{g(t)}
&\leq \int_{B_{r_0+4}}\abs{\Rsc_{g(0)}}^p\,\dd\mu_{g_0}
+t C_{m,r_0,p,\ubelow}.\qedhere
\end{align*}
\end{proof}

\begin{rem*}
We prove the $L^p(B_{r_0})$-estimates for $\Rsc_+$ and $\Rsc_-$ separately rather than directly estimating $\nm{\Rsc}_{L^p(B_{r_0})}$ because it is interesting to see that the estimate for the negative part of scalar curvature is simpler than the estimate for the positive part if $p>\frac{m}{2}$. 
The sign of the non-linear term $(p-\frac{m}{2})\Rsc_+^{p+1}$ respectively $(-p+\frac{m}{2})\Rsc_-^{p+1}$ makes the difference. 
\end{rem*}

\begin{proof}[Proof of Theorem \ref{thm:existence}]
Fix $T>0$. 
For every $k\in\mathbb{N}$, let $u_k\in C^{2,\alpha;1,\frac{\alpha}{2}}(M_k\times[0,T])$ denote the solution to problem \eqref{eqn:pde}. 
Let $U_k=u_k^{\eta}$ for $\eta=\frac{m-2}{4}$. 
Lemmata \ref{lem:pointwise} and~\ref{lem:180611} imply that for every fixed $t\in[0,T]$, the sequence $\{U_k(\cdot,t)\}_{4<k\in\mathbb{N}}$ is  bounded in the Sobolev space $W^{2,p}(M_1)$ for any $p>\frac{m}{2}$. 
In fact, we may apply the Calderon--Zygmund Inequality 
\cite[Theorem 9.11]{Gilbarg2001}
to the elliptic equation   
\begin{align*}
\Delta_{g_M} U_k&=\frac{\eta}{(m-1)}\bigl(\Rsc_{g_M} U_k-U_k^{1+\frac{1}{\eta}}\Rsc_{u_kg_M}\bigr)
\end{align*}
in $M_1\subset B_2$. Moreover, since 
\begin{align*}
\frac{\partial U_k}{\partial t}&=-\eta U_k\Rsc_{u_kg_M},
\end{align*}
we obtain that the sequence $\{U_k\}_{4\leq k\in\mathbb{N}}$ is bounded in $W^{1,p}(M_1\times[0,T])$ for any fixed $p>\frac{m}{2}$. 
If we choose $p=2(m+1)$, then Sobolev's embedding $W^{1,p}(M_1\times[0,T])\hookrightarrow C^{0,\alpha}(M_1\times[0,T])$ is compact for any $0<\alpha<\frac{1}{2}$ (recall that $M_1\subset M$ is smooth and bounded) and we obtain a subsequence $\Lambda_1\subset\mathbb{N}$ such that 
\begin{align*}
\{U_k|_{M_1\times[0,T]}\}_{4< k\in\Lambda_1}
\end{align*}
converges in $C^{0,\alpha}(M_1\times[0,T])$ to some $V_1$.
In particular, $\{U_k(\cdot,t)\}_{4< k\in\Lambda_1}$ converges to $V_1(\cdot,t)$ in $C^{0,\alpha}(M_1)$ for every fixed $t\in[0,T]$. 
As observed above, $\{U_k(\cdot,t)\}_{4< k\in\Lambda_1}$ is bounded in $W^{2,p}(M_1)$ which compactly embeds into $C^{1,\alpha}(M_1)$. 
Hence, a subsequence converges in $C^{1,\alpha}(M_1)$ and its limit must be $V_1(\cdot,t)$. 
Thus, $V_1\in C^{1,\alpha;0,\alpha}(M_1\times [0,T])$. 
Passing to the limit in the weak formulation of equation \eqref{eqn:180609}, we conclude that $V_1$ is a weak solution to equation \eqref{eqn:180609} in $M_1\times [0,T]$.
By parabolic regularity theory, $V_1$ is actually regular and a classical solution. 

We repeat this argument to obtain a subsequence $\Lambda_2\subset\Lambda_1$ such that  
\begin{align*}
\{U_k|_{M_2\times[0,T]}\}_{5<k\in\Lambda_2}
\end{align*}
converges in $C^{0,\alpha}(M_2\times[0,T])$ to some solution $V_2$ of equation \eqref{eqn:180609} in $M_2\times[0,T]$.
Iterating this procedure leads to a diagonal subsequence of $\{U_k\}_{4<k}$ which converges to a limit $U$ satisfying the Yamabe flow equation \eqref{eqn:180609} in $M\times[0,T]$.  
Moreover, the uniform bounds from Lemmata \ref{lem:pointwise} and \ref{lem:Rsc} are preserved in the limit and by construction, the initial condition is satisfied. 
\end{proof}


\clearpage 
\bibliography{Schulz-Yamabe_Flow}
 
\end{document}